\newtheorem{theorem}{Theorem}[section]
\newtheorem{proposition}[theorem]{Proposition}
\newcommand{\xR}{{]}{-\infty},+\infty]}
\newcommand{\Rex}{\xR}
\newcommand{\ps}{\smallbreak}
\newcommand{\lsc}{lower semicontinuous}
\newcommand{\del}{\partial}
\newcommand{\Sd}{\breve{\del}}
\newcommand{\dom} {{\rm dom} \kern.15em}
\newcommand{\tq}{:}
\newcommand{\la}{\langle}
\newcommand{\ra}{\rangle}
\newcommand{\eps}{\varepsilon}
\newcommand{\bx}{\bar{x}}
\newcommand{\xb}{\bar{x}}
\newcommand{\yb}{\bar{y}}
\begin{document}
\thispagestyle{empty}

\title
{Characterization of the monotone polar of subdifferentials}

\author{Marc Lassonde}

\address{Universit\'e des Antilles et de la Guyane,
  97159 Pointe \`a Pitre, France}

\email{marc.lassonde@univ-ag.fr}

\begin{abstract}
We show that a point is solution of the Minty variational inequality of subdifferential type
for a given function if and only if
the function is increasing along rays starting from that point.
This provides a characterization of the monotone polar of
subdifferentials of lower semicontinuous functions,
which happens  to be a common subset of their graphs depending only on the function.
\end{abstract}

\date{July 6, 2013}

\subjclass[2010]{Primary 49J52; Secondary 49K27, 26D10, 26B25}

\keywords{lower semicontinuity, subdifferential, lower Dini subderivative,
  Minty variational inequality, increase-along-rays property,
  monotone polar, maximal monotonicity.}

\maketitle

Given a graph $T\subset X\times X^*$, we call \textit{monotone polar of $T$} the set
$$
T^0:=\{(x,x^*)\in X\times X^*\tq\langle y^*-x^*,y-x\rangle \geq 0,\quad\forall (y,y^*)\in T\}.
$$
The object of this note is to characterize the monotone polar of
subdifferentials $\del f$ of extended real-valued \lsc\ functions $f$ on a Banach space $X$.

We see from the above definition that
a pair $(\xb,0)$ in $X\times X^*$ belongs to $(\del f)^0$ if and only if
$\xb$ is solution of the \textit{Minty variational inequality of subdifferential type}
defined by:
$$\sup \la \del f(y),\xb-y\ra \le 0,\quad\forall y\in X.$$
We are thus led to consider such systems of inequalities.
\ps
In Section \ref{MVIderiv}, we study Minty variational inequalities of
(lower Dini) subderivative type for functions $f$ on a convex subset $C\subset X$.
We show that a point $\xb\in C$ is solution of such a system
if and only if $f$ is increasing on $C$ along rays starting from $\bx\in C$.
(This is a variant of \cite[Theorem 2.1]{CGR04}.)
In Section \ref{MVIdiff}, we consider the case of subdifferentials of functions $f$.
These objects are simply set-valued mappings $\partial f: X \rightrightarrows X^\ast$
that lie between the subdifferential of convex analysis for $f$ and its Clarke subdifferential,
and satisfy the so-called Separation Principle.
Using the link established in \cite{JL13} between subderivative and subdifferential,
we show that Minty variational inequalities of subderivative type and of
subdifferential type have the same set of solutions.
Finally, in Section \ref{polar}, we provide the desired characterization of
the monotone polar of subdifferentials  of functions $f$: it consists in
the set of all pairs $(x,x^*)$ in $X\times X^*$ such that the perturbed function
$f-x^*$ is increasing on $X$ along rays starting from $x$.
Therefore, all subdifferentials have the same polar, which is a common
subset of their graphs.

\section{Minty variational inequality of subderivative type}\label{MVIderiv}

In the following, $X$ is a real Banach space with unit ball $B_X$,
$X^*$ is its topological dual,
and $\la .,. \ra$ is the duality pairing.
For $x, y \in X$, we let $[x,y]:=\{ x+t(y-x) \tq t\in[0,1]\}$, and $[x,y[$ is defined accordingly.

All the functions $f : X\to\Rex$ are assumed to be lower semicontinuous
and \textit{proper}, which means that
the set  $\dom f:=\{x\in X\tq f(x)<\infty\}$ is nonempty.
The (radial or lower Dini) \textit{subderivative} of a function $f$ at a point
$\bx\in \dom f$ is given by:
\begin{equation*}\label{general-dd}
\forall d\in X,\quad f'(\xb;d):=\liminf_{t\searrow 0}\,\frac{f(\xb+td)-f(\xb)}{t}.
\end{equation*}

Here is a simple version of the mean value inequality for extended real-valued \lsc\ functions
in terms of the subderivative
(for a proof, see, e.g., \cite{JL12,JL13}):

\medbreak\noindent
{\bf Mean Value Inequality}.
{\it Let $X$ be a Banach space, $f:X\to\xR$ be \lsc, $\xb\in X$ and $x\in\dom f$. Then, for every real number $\lambda\le f(\xb)-f(x)$, there exists $x_0\in [x,\xb[$ such that
$\lambda\le f'(x_0;\xb-x)$. 
}
\medbreak
The next proposition asserts that, given a function $f$ and a convex set $C\subset X$,
a point $\xb\in C$
is solution of the Minty variational inequality of subderivative type
associated to $f$ on $C$
if and only if
$f$ is increasing on $C$ along rays starting from $\xb$.
It is a variant of \cite[Theorem~2.1]{CGR04}.

\begin{proposition}\label{mainprop}
Let $X$ be a Banach space, $f:X\to\xR$ be \lsc, $C\subset X$ be convex and $\xb\in C$.
Then, the following are equivalent:\ps
{\rm(a)} $f'(y;\xb-y) \le 0$ for every $y\in C$,\ps
{\rm(b)} $f(y+t(\xb-y))\le f(y)$ for every $y\in C$ and $t\in[0,1]$.
\end{proposition}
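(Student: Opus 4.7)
The plan is to prove the two implications separately. Implication (b) $\Rightarrow$ (a) is immediate from the very definition of the subderivative: for any $y\in C$, (b) yields $\bigl(f(y+t(\xb-y))-f(y)\bigr)/t\le 0$ for every $t\in(0,1]$, and passing to the $\liminf$ as $t\searrow 0$ gives $f'(y;\xb-y)\le 0$.

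The substantive direction is (a) $\Rightarrow$ (b), which I would prove by contradiction using the Mean Value Inequality quoted in the excerpt. Suppose there exist $y\in C$ and $t_0\in(0,1]$ with $f(z)>f(y)$, where $z:=y+t_0(\xb-y)\in C$ by convexity; in particular $y\in\dom f$. Pick $\lambda>0$ with $\lambda\le f(z)-f(y)$. The Mean Value Inequality applied to $f$ on the segment from $y$ to $z$ (playing the roles of $x$ and $\xb$ in the statement) provides some $x_0\in[y,z[$ with $f'(x_0;z-y)\ge\lambda>0$. Writing $x_0=y+st_0(\xb-y)$ with $s\in[0,1)$, one has $z-y=t_0(\xb-y)$ and $\xb-x_0=(1-st_0)(\xb-y)$, so these two directions are positive multiples of each other. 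Positive homogeneity of $d\mapsto f'(x_0;d)$ therefore converts the inequality $f'(x_0;z-y)>0$ into $f'(x_0;\xb-x_0)>0$. Since $x_0\in[y,z]\subset C$, this contradicts (a) applied at $x_0$.

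The main obstacle I anticipate is purely of a matching kind: hypothesis (a) requires evaluating the subderivative in the direction $\xb-x_0$, whereas the Mean Value Inequality naturally produces information in the direction $z-y$. The whole argument hinges on the observation that on the ray from $y$ to $\xb$ these two directions differ by the positive factor $(1-st_0)/t_0$, using $s<1$ and $t_0\le 1$, so that strict positivity is preserved under the rescaling.
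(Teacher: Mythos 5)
Your proof is correct and follows essentially the same route as the paper: the (b)\,$\Rightarrow$\,(a) direction straight from the definition of the subderivative, and the converse via the Mean Value Inequality combined with positive homogeneity of $d\mapsto f'(x_0;d)$ to pass from the direction $z-y$ to $\xb-x_0$ (the paper phrases this as $\neg$(b)\,$\Rightarrow$\,$\neg$(a) with $\xb-y_0=\tau(\yb-y)$, $\tau>0$, which is exactly your rescaling argument).
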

\begin{proof}
We show that $\neg$(b) implies $\neg$(a). 
Assume there exist $y\in C$ and $\yb\in [y,\xb]$ 
such that $f(\yb)>f(y)$.
Applying the above Mean Value Inequality with $0<\lambda<f(\yb)-f(y)$, we find
$y_0\in [y,\yb[\subset C$ such that
\[f'(y_0;\yb-y)\ge \lambda> 0.\]
Since $\xb-y_0=\tau(\yb-y)$ for some $\tau>0$, we have $f'(y_0;\xb-y_0)=\tau f'(y_0;\yb-y)> 0$,
proving that $\neg$(a) holds.
\ps
Conversely, we show that (b) implies (a).
Let $y\in C$ and assume $f(y+t(\xb-y))\le f(y)$ for every
$t\in [0,1]$. Then,
$$
\limsup_{t\searrow 0}\frac{f(y+t(\xb-y))- f(y)}{t}\le 0;
$$
a fortiori, $f'(y;\xb-y)\le 0$.
\end{proof}

\section{Minty variational inequality of subdifferential type}\label{MVIdiff}

The subdifferential of a \lsc\ function $f$ in the sense of convex analysis
at a point $\bx\in \dom f$ is the subset of $X^*$ defined by
$$
\del_{cx} f(\xb):= \{ x^* \in X^* \tq \la x^*,y-\xb\ra + f(\xb) \leq f(y),\, \forall y \in X \}.
$$
The Clarke subdifferential \cite{Cla90} of $f$ at $\bx\in \dom f$ is given by
\begin{eqnarray*}\label{Csub}
\partial_{C} f(\bx) := \{x^* \in X^* \tq \langle x^*,d\rangle \leq
f^{\uparrow}(\bx;d), \, \forall d \in X\},
\end{eqnarray*}
where
\begin{eqnarray*}\label{Csubderiv}
f^{\uparrow}(\bx;d):= \sup_{\delta>0}
\limsup_{\substack{t \searrow 0 \\{x \to_f \bx}}} \inf_{d' \in d+ \delta B_X}
\frac{f(x+td') -f(x)}{t}.
\end{eqnarray*}
It is easily seen that $\del_{cx} f(\xb)\subset \partial_{C} f(\bx)$
with equality whenever $f$ is convex \lsc.
\ps
In what follows, we call \textit{subdifferential} any operator $\del$ that associates 
a set-valued mapping $\partial f: X \rightrightarrows X^\ast$
to each function $f$ on $X$ so that $\partial f$ lies between $\del_{cx} f$
and $\partial_{C} f$:
\begin{equation}\label{inclusdansClarke}
\del_{cx} f\subset \partial f\subset \partial_{C} f,
\end{equation}
and satisfies the elementary property
$\del(f-x^*)(x)=\del f(x) -x^*$ for all $x\in X$ and $x^*\in X^*$.
We also require that subdifferentials satisfy
the following basic calculus rule on the Banach space $X$: 
\medbreak
\textit{Separation Principle}.
For any \lsc\ functions $f,\varphi$ on $X$ with $\varphi$ convex Lipschitz
near $\xb\in\dom f \cap\dom \varphi$,
if $f+\varphi$ admits a local minimum at $\xb$, then
$0\in \del f(\xb)+ \del \varphi(\xb).$
\ps
\textit{Examples}.
The Clarke subdifferential and 
the Ioffe subdifferential satisfy the Separation Principle in any Banach space.
The limiting versions of the basic elementary subdifferentials
(proximal, Fr\'echet, Hadamard) satisfy the Separation Principle
in appropriate Banach spaces.
All these subdifferentials also satisfy the inclusions (\ref{inclusdansClarke})
For more details, see, e.g., \cite{Iof12,JL12} and the references therein.
\ps
Here is the link between subderivative and subdifferential.
It involves the following $\eps$-enlargement of the subdifferential:
\begin{equation*}\label{FJ-sdiff}
\Sd_\eps f(\xb):=\{x^*_\eps\in X^*\tq x^*_\eps\in\del f(x_\eps) \mbox{ with }
\|x_\eps-\xb\|\le\eps,\,|f(x_\eps)-f(\xb)|\le\eps,\, \la x^*_\eps,x_\eps-\xb\ra\le \eps\}.
\end{equation*}

\medbreak\noindent
{\bf Subderivative/Subdifferential Inequality \cite{JL13}}.
{\it Let $X$ be a Banach space, $f:X\to\xR$ be lower semicontinuous
and $\xb\in\dom f$. Then, for every $\eps>0$,
the sets $\Sd_\eps f(\xb)$ are nonempty and
\begin{equation}\label{CDD-formula}
\forall d\in X,\quad f'(\xb;d)\le\inf_{\eps>0}\sup\la \Sd_\eps f(\xb),d\ra.
\end{equation}
}
\smallbreak
Injecting Formula (\ref{CDD-formula}) into  Proposition \ref{mainprop}
enables us to show that
Minty variational inequalities of subderivative type and of subdifferential type
on an open convex set have actually the same set of solutions.
This complements our previous result \cite[Theorem 3.3]{JL13}
on optimality conditions in terms of subdifferentials.

\begin{theorem}\label{main}
Let $X$ be a Banach space, $f:X\to\xR$ be \lsc, $U\subset X$ be
open convex and $\xb\in U$.
Then, the following are equivalent:\ps
{\rm(a)} $\,\sup \la \del f(y),\xb-y\ra \le 0$ for every $y\in U$,\ps
{\rm(b)} $f(y+t(\xb-y))\le f(y)$ for every $y\in U$ and $t\in[0,1]$.
\end{theorem}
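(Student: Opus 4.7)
The plan is to reduce both implications to Proposition \ref{mainprop}, which is the analogous equivalence for the subderivative, by bridging to the subdifferential in opposite ways: the Subderivative/Subdifferential Inequality handles (a)$\Rightarrow$(b), while the Clarke inclusion $\del f\subset\del_C f$ (from (\ref{inclusdansClarke})) handles (b)$\Rightarrow$(a). No direct use of the Separation Principle is needed here; it has already been absorbed into the Subderivative/Subdifferential Inequality.

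For (a)$\Rightarrow$(b), I will prove $f'(y;\xb-y)\le 0$ for every $y\in U$ and then apply Proposition \ref{mainprop} with $C=U$. Fix $y\in U$; because $U$ is open, every point $x_\eps$ with $\|x_\eps-y\|\le\eps$ lies in $U$ once $\eps$ is small enough. Any $x^*_\eps\in\Sd_\eps f(y)$ comes with such an $x_\eps\in\del f^{-1}(x^*_\eps)$, and I would decompose
\[
\la x^*_\eps,\xb-y\ra=\la x^*_\eps,\xb-x_\eps\ra+\la x^*_\eps,x_\eps-y\ra.
\]
Hypothesis (a) applied at $x_\eps\in U$ bounds the first summand by $0$, while the defining condition of $\Sd_\eps f(y)$ bounds the second by $\eps$. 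Taking the supremum over $\Sd_\eps f(y)$ and then the infimum over $\eps>0$, inequality (\ref{CDD-formula}) yields $f'(y;\xb-y)\le 0$, and Proposition \ref{mainprop} delivers (b).

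For (b)$\Rightarrow$(a), fix $y\in U$ and $x^*\in\del f(y)$. Using $\del f\subset\del_C f$, I have $\la x^*,\xb-y\ra\le f^{\uparrow}(y;\xb-y)$, so it suffices to show that the Clarke subderivative on the right is $\le 0$. For any $\delta>0$ and any $x\in U$ with $\|x-y\|\le\delta$, the choice $d':=\xb-x$ belongs to $(\xb-y)+\delta B_X$, so hypothesis (b) applied at this $x$ gives
\[
\inf_{d'\in (\xb-y)+\delta B_X}\frac{f(x+td')-f(x)}{t}\le\frac{f(x+t(\xb-x))-f(x)}{t}\le 0.
\]
Passing to $\limsup$ as $t\searrow 0$ and $x\to_f y$ (for which $x$ is eventually in $U$ with $\|x-y\|\le\delta$ because $U$ is open), and then to $\sup_{\delta>0}$, produces $f^{\uparrow}(y;\xb-y)\le 0$, as required. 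The main delicate point is just the bookkeeping that ensures the auxiliary points $x_\eps$ in the enlargement $\Sd_\eps f(y)$ and the perturbed base points $x$ in the Clarke construction both lie in $U$ so that hypotheses (a), respectively (b), can legitimately be invoked there; this is exactly the role played by the openness of $U$.
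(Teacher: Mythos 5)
Your proposal is correct and follows essentially the same route as the paper: the implication (b)$\Rightarrow$(a) is the paper's argument verbatim (bound $f^{\uparrow}(y;\xb-y)$ and invoke $\del f\subset\del_C f$), while for (a)$\Rightarrow$(b) you combine Proposition \ref{mainprop} with inequality (\ref{CDD-formula}) exactly as the paper does, merely running the argument directly (bounding $\la x^*_\eps,\xb-y\ra$ by $\eps$ via the decomposition through $x_\eps$) instead of contrapositively (extracting a witness pair $(y_\eps,y^*_\eps)$ violating (a)); the estimates and the role of the openness of $U$ are identical in both versions.
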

\begin{proof}
We show that $\neg$(b) implies $\neg$(a).
Assume there exist $y \in U$ and $\yb\in[y,\xb]$
such that $f(\yb)> f(y)$. Then, by Proposition \ref{mainprop}
there exists $y_0\in U$ such that $f'(y_0;\xb-y_0) >0$.
Let $\eps>0$ such that $f'(y_0;\xb-y_0) > \eps$ and $y_0+\eps B_X\subset U$,
and apply Formula (\ref{CDD-formula}) at point $y_0$
and direction $d=\xb-y_0$ to
obtain a pair $(y_\eps,y^*_\eps)\in \del f$ such that
\[
\|y_\eps-y_0\|< \eps, 
\quad\la y^*_\eps, y_\eps-y_0\ra <\eps,
\quad\la y^*_\eps, \xb -y_0\ra >\eps.
\]
Hence, $y_\eps\in U$ and $y^*_\eps\in \del f(y_\eps)$
satisfy $\la y^*_\eps,\xb -y_\eps\ra>0$,
which proves $\neg$(a).

Conversely, let us assume that $f(y'+t(\xb-y'))\le f(y')$ for every $y'\in U$ and
$t\in [0,1]$. We have to show that
$\sup \la \del f(y),\xb-y\ra \le 0$ for every $y\in U$.
Let $y\in U$. We claim that $f^{\uparrow}(y;\xb-y)\le 0$.
Indeed, let $\delta>0$ such that $y+\delta B_X\subset U$, $t\in ]0,1]$
and $y'\in y+\delta B_X$. Then
$$
\inf_{d'\in \xb-y+\delta B_X}\frac{f(y'+td') -f(y')}{t}
  \le \frac{f(y'+t(\xb-y')) -f(y')}{t}\le 0.
$$
It follows that, for any $\delta>0$,
$$
\limsup_{\substack{t \searrow 0 \\{y' \to_f y}}}
\inf_{d'\in \xb-y+\delta B_X}\frac{f(y'+td') -f(y')}{t}\le 0,
$$
hence, $f^{\uparrow}(y;\xb-y)\le 0$. This proves the claim.
We derive that
$$\sup \la \del_C f(y),\xb-y\ra \le f^{\uparrow}(y;\xb-y)\le 0,$$
so also $\sup \la \del f(y),\xb-y\ra \le 0$ since $\del f\subset \del_Cf$.
This completes the proof.
\end{proof}

\section{Monotone polar of subdifferentials}\label{polar}

Given a set-valued operator $T:X\rightrightarrows X^*$,
or graph $T\subset X\times X^*$, we let
$$
T^0:=\{(x,x^*)\in X\times X^*\tq\langle y^*-x^*,y-x\rangle \geq 0,\ 
\forall (y,y^*)\in T\}
$$
be the set of all pairs $(x,x^*)\in X\times X^*$ {\it monotonically related} to $T$.
As in \cite{M-LS05}, we call \textit{monotone polar of $T$}
the operator $T^0:X\rightrightarrows X^*$, or graph $T^0\subset X\times X^*$, thus defined.
An operator $T:X\rightrightarrows X^*$ is said to be
{\it monotone} provided $T\subset T^0$,
{\it maximal monotone} provided $T= T^0$ and
{\it monotone absorbing} provided $T^0\subset T$.

In \cite{JL13} we proved that all subdifferentials of \lsc\ functions,
not only the convex ones, are monotone absorbing.
Below we show a more precise statement:
the monotone polar of any subdifferential $\del f$
is precisely the set of all pairs $(x,x^*)\in X\times X^*$ such that the perturbed function
$f-x^*$ is increasing along all rays starting from $x$.
Thus, all subdifferentials of \lsc\ functions have the same monotone polar
and are monotone absorbing. 

\begin{theorem}\label{main2}
Let $X$ be a Banach space and let $f:X\to\xR$ be proper \lsc. Then, for every $x\in X$,
$$
(\del f)^0(x)=\{\,x^*\in X^*\tq (f-x^*)(y+t(x-y))\le (f-x^*)(y), \ 
\forall y \in X,\ t\in[0,1]\,\}.
$$
\end{theorem}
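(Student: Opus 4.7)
The plan is to reduce Theorem \ref{main2} directly to Theorem \ref{main} by a perturbation trick. The key observation is that the shift rule $\del(f-x^*)(y)=\del f(y)-x^*$ (built into the definition of a subdifferential in this paper) allows one to translate the condition ``$x^*$ is monotonically related to $\del f$ at $x$'' into a Minty variational inequality of subdifferential type for the perturbed function $f-x^*$ based at the point $\xb:=x$.

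In detail, I will unwind the monotone polar: by definition, $x^*\in(\del f)^0(x)$ means that $\la y^*-x^*,y-x\ra\ge 0$, i.e.\ $\la y^*-x^*,x-y\ra\le 0$, for every $(y,y^*)\in\del f$. Setting $z^*:=y^*-x^*$ and invoking the shift rule, this is equivalent to $\la z^*,x-y\ra\le 0$ for every $y\in X$ and every $z^*\in\del(f-x^*)(y)$, that is,
$$\sup\la\del(f-x^*)(y),x-y\ra\le 0\quad\forall y\in X.$$
This is exactly condition (a) of Theorem \ref{main} applied to the proper lsc function $f-x^*$ (which is indeed proper lsc because $x^*$ is continuous linear), with $U=X$ and distinguished point $\xb=x$. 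Hence Theorem \ref{main} gives the equivalence with the increase-along-rays condition
$$(f-x^*)(y+t(x-y))\le(f-x^*)(y)\quad\forall y\in X,\ t\in[0,1],$$
which is precisely the set on the right-hand side of the claim.

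The main ``obstacle'' is essentially bookkeeping: once the correct dictionary has been set up, the substantive work has already been absorbed into Theorem \ref{main} (which itself rests on the Mean Value Inequality and the Subderivative/Subdifferential Inequality). What must be checked carefully is that the shift rule is being used as an equality rather than merely an inclusion, so that the pairs $(y,y^*)\in\del f$ are in one-to-one correspondence with the pairs $(y,z^*)\in\del(f-x^*)$ via $z^*=y^*-x^*$; this is precisely why $\del(f-x^*)=\del f-x^*$ was imposed as an axiom on $\del$. It is also worth noting that choosing $U=X$ is legitimate because $X$ is itself open and convex, so no restriction on $y$ enters the final characterization.
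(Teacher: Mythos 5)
Your proposal is correct and follows essentially the same route as the paper's own proof: rewrite the monotone-relatedness condition via the shift rule $\del(f-x^*)=\del f-x^*$ as the Minty inequality $\sup\la\del(f-x^*)(y),x-y\ra\le 0$ for all $y\in X$, then apply Theorem \ref{main} with $U=X$. Your extra remarks (that the shift rule must be an equality and that $U=X$ is open convex) are accurate points of care that the paper leaves implicit.
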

\begin{proof}
Let $x^*\in (\del f)^0(x)$. This means that
$\la y^*-x^*,y-x\ra\ge 0$ for every $(y,y^*)\in\del f$.
Since $\del(f-x^*)(y)=\del f(y) -x^*$,
this can be rewritten as
$$
\sup\, \la \del (f-x^*)(y),x-y\ra\le 0 \quad\mbox{for every } y\in X,
$$
which is equivalent by Theorem \ref{main} to
$$(f-x^*)(y+t(x-y))\le (f-x^*)(y)\quad\mbox{for every } y\in X
\mbox{ and } t\in[0,1].
$$
The proof is complete.
\end{proof}

{\small
}
\end{document}